\newtheorem{theorem}{Theorem}[section]
\newtheorem{lemma}[theorem]{Lemma}
\newtheorem{proposition}[theorem]{Proposition}
\theoremstyle{definition}
\newtheorem{definition}[theorem]{Definition}
\theoremstyle{remark}
\newcommand{\myd}[1]{\text{\d{$#1$}}}
  \title{IKT$^\omega$ and \L{}ukasiewicz-models}
  \author[Fjellstad]{Andreas Fjellstad} 
  \address{Department of Philosophy\\
    University of Bergen\\
    Postboks 7805\\
    5020 Bergen\\
    Norway
    }%
      \email{afjellstad@gmail.com}
  \author[Olsen]{Jan-Fredrik Olsen}
  \address{Centre for Mathematical Sciences\\
    Lund University\\
    P.O. Box 118\\
    SE-221 00 Lund\\
    Sweden
    }
    	\email{jan-fredrik.olsen@math.lu.se}
\keywords{non-contractive truth, multiplicative quantifiers, infinitary sequent, soundness, $\omega$-inconsistency, inconsistency, vacuous quantification}
\subjclass[2010]{03B47, 03C90, 03C75}
\begin{document}

\maketitle

\begin{abstract}
In this note, we  show that the first-order logic IK$^\omega$ is sound with regard to the models obtained from continuum-valued \L{}ukasiewicz-models for first-order languages by treating the quantifiers as infinitary strong disjunction/conjunction rather than infinitary weak disjunction/conjunction. Moreover, we show that these models cannot be used to provide a new consistency proof for the theory of truth IKT$^\omega$ obtained by expanding IK$^\omega$ with transparent truth because the models are inconsistent with transparent truth. Finally, we show that whether or not this inconsistency can be reproduced in the sequent calculus for IKT$^\omega$ depends on how vacuous quantification is treated.
\end{abstract}

\section{Introduction}
In Zardini \cite{Zardini2011-ZARTWC}, IKT$^\omega$ is presented as the theory of truth obtained by expanding the logic IK$^\omega$ with a transparent truth predicate. While the propositional fragment of IK$^\omega$ is propositional affine logic, that is, linear logic with weakening, the quantifiers of IK$^\omega$ are generalisations of multiplicative conjunction and disjunction as opposed to generalisations of additive conjunction and disjunction. To capture multiplicative quantifiers, \cite{Zardini2011-ZARTWC} relies on a sequent calculus which is infinitary not only in the sense that it contains rules with infinitely many premises, but also that the sequents may contain infinitely many formulas.

The aim of this paper is to shed some light on the multiplicative quantifiers introduced by \cite{Zardini2011-ZARTWC}. We show first that the sequent calculus for the logic IK$^\omega$ is sound with regard to models obtained from first-order continuum-valued \L{}ukasiewicz-models by treating the quantifiers as infinitary strong disjunction/conjunction rather than infinitary weak disjunction/conjunction. One might thus think that these models can be used to provide a new consistency proof for IKT$^\omega$ to replace the consistency proof presented by \cite{Zardini2011-ZARTWC}, which was found to be erroneous by Fjellstad \cite{Fjellstad2020-FJEANO}. Instead, we proceed to show that not only are these models inconsistent with transparent truth, but we can also reproduce the inconsistency in the sequent calculus for IKT$^\omega$ by taking advantage of vacuous quantification.

\section{The sequent calculus $S_{\text{IK}^\omega}$}
The logic IK$^\omega$ is defined by \cite{Zardini2011-ZARTWC} for a first-order language based on the connectives $\rightarrow$, $\neg$, $\exists$ and $\forall$ in addition to $\otimes$ and $\oplus$ as symbols for conjunction and disjunction respectively. In this paper, we shall restrict our attention to a first-order language $\mathcal{L}$ based on the connectives $\exists$, $\rightarrow$ and $\neg$ where the set of well-formed formulas is defined with the standard recursive clauses. 

Following \cite{Zardini2011-ZARTWC}, IK$^\omega$ is defined with a sequent calculus based on sequents as multisets of $\mathcal{L}$-formulas which are such that both the antecedent and succedent multiset of a sequent can contain $\omega$ many formulas. Intuitively, multisets are collections of objects where their multiplicity matter but not their order. More formally, we will define a multiset of formulas as a pair $\langle X,f\rangle$ where $X$ is a set of formulas and $f$ a function from $X$ to $\omega+1$. In the presentation of the sequent calculus for IKT$^\omega$, we use the upper-case Latin letters $A$ and $B$ as meta-linguistic variables for formulas of $\mathcal{L}$, and upper-case Greek letters of the form $\Gamma$ and $\Delta$ as meta-linguistic variables for multisets of formulas. As usual, $\Gamma, A$ represents the multiset union of $\Gamma$ and $\{A\}$. $A[t/x]$ refers to the formula obtained by replacing every occurrence of $x$ in $A$ with $t$ and $\text{CTer}_\mathcal{L}$ is the set of closed terms of $\mathcal{L}$.

\begin{definition}
Let $S_{\text{IK}^\omega}$ be a sequent calculus with sequents as multisets of $\mathcal{L}$-formulas obtained with the initial sequents
\[
\Axiom$A,\Gamma\Rightarrow\fCenter\;\Delta,A$
\DisplayProof
\]
and the rules
\[
\Axiom$\Gamma\Rightarrow\fCenter\;\Delta,A$
\RightLabel{$\neg$L}
\UnaryInf$\neg A,\Gamma\Rightarrow\fCenter\;\Delta$
\DisplayProof
\quad
\Axiom$A,\Gamma\Rightarrow\fCenter\;\Delta$
\RightLabel{$\neg$R}
\UnaryInf$\Gamma\Rightarrow\fCenter\;\Delta,\neg A$
\DisplayProof
\]

\[
\Axiom$\Gamma\Rightarrow\fCenter\;\Delta,A$
\Axiom$B,\Gamma'\Rightarrow\fCenter\;\Delta'$
\RightLabel{$\rightarrow$L}
\BinaryInf$A\rightarrow B,\Gamma,\Gamma'\Rightarrow\fCenter\;\Delta,\Delta'$
\DisplayProof
\quad\quad
\Axiom$A,\Gamma\Rightarrow\fCenter\;\Delta,B$
\RightLabel{$\rightarrow$R}
\UnaryInf$\Gamma\Rightarrow\fCenter\;\Delta,A\rightarrow B$
\DisplayProof
\]

\[
\Axiom$A[t_0/x],\Gamma_0\Rightarrow\fCenter\;\Delta_0$
\Axiom$A[t_1/x],\Gamma_1\Rightarrow\fCenter\;\Delta_1$
\Axiom$A[t_2/x],\Gamma_2\Rightarrow\fCenter\;\Delta_2$
\Axiom$\ldots\fCenter$
\RightLabel{$\exists\text{L}^\omega$}
\QuaternaryInf$\exists xA,\Gamma_0,\Gamma_1,\Gamma_2,\ldots\Rightarrow\fCenter\;\Delta_0,\Delta_1,\Delta_2,\ldots$
\DisplayProof
\]

\[
\Axiom$\Gamma\Rightarrow\fCenter\;\Delta,A[t_0/x],A[t_1/x],A[t_2/x],\ldots$
\RightLabel{$\exists\text{R}^\omega$}
\UnaryInf$\Gamma\Rightarrow\fCenter\;\Delta,\exists xA$
\DisplayProof
\]

\noindent where $\{t_j\}_{j=0}^\infty$ represents a complete enumeration of $\text{CTer}_\mathcal{L}$. We refer to IK$^\omega$ as the logic defined as follows, where $\Gamma$ and $\Delta$ are multisets of formulas: 
\begin{equation*}
\text{$\langle\Gamma,\Delta\rangle\in\text{IK}^\omega$ if and only if $S_{\text{IK}^\omega}\vdash\Gamma\Rightarrow\Delta$.}
\end{equation*}
\end{definition}

\section{\L{}ukasiewicz-models}
\label{sec:lukasiewicz}
To simplify things, we  restrict our attention to valuations satisfying the conditions for first-order continuum-valued \L{}ukasiewicz-models, thus avoiding a domain of quantification. For how to  deal with a domain of quantification for continuum-valued \L{}ukasiewicz-models, we refer the interested reader to Hajek, Paris and Shepherdson \cite{Hajek2000-HAJTLP}.
\begin{definition}
A function $\mathcal{V}$ from the sentences of $\mathcal{L}$ to $[0,1]$ is a first-order continuum-valued \L{}ukasiewicz-valuation (\L{}Q-valuation) if and only if 
\begin{itemize}
    \item $\mathcal{V}(\neg A)=1-\mathcal{V}(A)$,
    \item $\mathcal{V}(A\rightarrow B)=\min\{1,1-\mathcal{V}(A)+\mathcal{V}(B)\}$,
    \item $\mathcal{V}(\exists xA)=\sup\{\mathcal{V}(A[t/x])\mid t\in\text{CTer}_\mathcal{L}\}$.
\end{itemize}
We refer to the logic defined as the set of formulas that is assigned the value $1$ in every \L{}Q-valuation as \L{}$_\infty$.
\end{definition}

It is quite natural to look at continuum-valued \L{}ukasiewicz-models for soundness in the case of $S_{\text{IK}^\omega}$ since propositional affine logic is a sublogic of propositional \L{}ukasiewicz-logic (see, e.g., Metcalfe, Olivetti and Gabbay \cite{ProofFuzzy}). However, $S_{\text{IK}^\omega}$ is not sound with regard to the above models. To show this, we require a way to interpret infinitary multiset-sequents in continuum-valued \L{}ukasiewicz-models. Following, for example, Gottwald \cite{sep-logic-manyvalued}, we can distinguish between strong and weak disjunction and conjunction in \L{}ukasiewicz-models based on the clauses used to define the connectives:
\begin{itemize}
    \item Weak disjunction: $\max\{\mathcal{V}(A),\mathcal{V}(B)\}$.
    \item Strong disjunction: $\min\{1,\mathcal{V}(A)+\mathcal{V}(B)\}$.
    \item Weak conjunction: $\min\{\mathcal{V}(A),\mathcal{V}(B)\}$.
    \item Strong conjunction: $\max\{0,\mathcal{V}(A)+\mathcal{V}(B)-1\}$.
\end{itemize} 
While we have not included connectives defined with these clauses in our language, we can nonetheless utilise the clauses for strong disjunction and conjunction to interpret $S_{\text{IK}^\omega}$-sequents in our models. In fact, we can interpret $S_{\text{IK}^\omega}$-sequents directly in the models by treating the antecedent as an infinitary strong conjunction and the succedent as an infinitary strong disjunction. This is in line with \cite{Zardini2011-ZARTWC}'s insistence on defining an infinitary multiset-multiset logic rather than a set-set logic, and it allows us to define an infinitary multiset-multiset logic on \L{}ukasiewicz-models.

\begin{definition}
A $S_{\text{IK}^\omega}$-sequent $\Gamma\Rightarrow\Delta$ is sound with regard to a \L{}Q-valuation $\mathcal{V}$ if and only if $$1-\min\Big\{1,\sum_{A\in\Gamma}(1-\mathcal{V}(A))\Big\}\leq\min\Big\{1,\sum_{B\in\Delta}(\mathcal{V}(B)\Big\}.$$
An $S_{\text{IK}^\omega}$-sequent $\Gamma\Rightarrow\Delta$ is valid if and only if $\Gamma\Rightarrow\Delta$ is sound with regard to every \L{}Q-valuation. An $S_{\text{IK}^\omega}$-rule $\mathcal{R}$ is sound with regard to \L{}Q-valuations if and only if, for every \L{}Q-valuation $\mathcal{V}$, if every premise $\Gamma'\Rightarrow\Delta'$ of $\mathcal{R}$ is sound with regard to $\mathcal{V}$, then the conclusion $\Gamma'\Rightarrow\Delta'$ of $\mathcal{R}$ is sound with regard to $\mathcal{V}$.
\end{definition}

In the following, we help ourselves to the notation $\mathcal{V}(\Gamma)$ and $\mathcal{V}(\Delta)$ where $\mathcal{V}(\Gamma)$ is an abbreviation of $1-\min\{1,\sum_{A\in\Gamma}(1-\mathcal{V}(A))\}$ and $\mathcal{V}(\Delta)$ is an abbreviation of $\min\{1,\sum_{B\in\Delta}\mathcal{V}(B)\}$. Importantly, $\Gamma$ and $\Delta$ are multisets, so the value of $A$/$B$ must be added once for each occurrence of formula $A$/$B$ in $\Gamma$/$\Delta$.
\begin{theorem}
The rule $\exists\text{R}^\omega$ of $S_{\text{IK}^\omega}$ is not sound with regard to \L{}Q-valuations.
\end{theorem}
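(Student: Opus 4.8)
The plan is to exhibit a single \L{}Q-valuation together with one instance of $\exists\text{R}^\omega$ whose premise is sound but whose conclusion is not. Since the rule leaves the antecedent untouched and only alters the succedent, replacing the infinite list $A[t_0/x],A[t_1/x],\ldots$ by the single formula $\exists xA$, the definition of soundness reduces everything to comparing the two succedent values $\mathcal{V}(\Delta,A[t_0/x],A[t_1/x],\ldots)$ and $\mathcal{V}(\Delta,\exists xA)$ against a common antecedent value $\mathcal{V}(\Gamma)$. The first aggregates the values $\mathcal{V}(A[t_j/x])$ through the capped infinite sum $\min\{1,\sum_j\mathcal{V}(A[t_j/x])+\cdots\}$, whereas the second contributes only $\mathcal{V}(\exists xA)=\sup_t\mathcal{V}(A[t/x])$. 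The whole point is that an infinite sum of small positive reals can reach the cap $1$ while their supremum stays strictly below $1$, and this gap is exactly what breaks soundness.

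Concretely, I would take $\Gamma$ and $\Delta$ both empty and let $A$ be an atomic formula $P(x)$, so that $A[t/x]=P(t)$. Since a \L{}Q-valuation is constrained only on compound sentences, I am free to fix $\mathcal{V}(P(t))=\tfrac12$ for every closed term $t\in\text{CTer}_\mathcal{L}$, and this determines a legitimate \L{}Q-valuation $\mathcal{V}$. With the empty antecedent one computes $\mathcal{V}(\Gamma)=1-\min\{1,0\}=1$, so in either sequent the antecedent forces the succedent value to be $1$ for soundness to hold.

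For the premise, the succedent value is $\min\{1,\sum_{j=0}^\infty\tfrac12\}=\min\{1,\infty\}=1$, so the premise $\Rightarrow P(t_0),P(t_1),\ldots$ is sound, since $1\leq 1$. For the conclusion $\Rightarrow\exists xP(x)$, however, $\mathcal{V}(\exists xP(x))=\sup\{\mathcal{V}(P(t))\mid t\in\text{CTer}_\mathcal{L}\}=\tfrac12$, so the succedent value is $\min\{1,\tfrac12\}=\tfrac12<1=\mathcal{V}(\Gamma)$, and the conclusion is not sound. Hence this instance of $\exists\text{R}^\omega$ maps a sound premise to an unsound conclusion, witnessing the failure of soundness.

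I do not expect any serious obstacle here: the only points requiring care are the bookkeeping conventions for empty multisets (an empty antecedent evaluating to $1$ and an empty succedent to $0$) and the observation that the freedom to assign atomic sentences arbitrary values in $[0,1]$ genuinely yields a \L{}Q-valuation taking the constant value $\tfrac12$ on all instances $P(t)$. The conceptual content is captured entirely by the sum-versus-supremum mismatch, and any fixed constant $c\in(0,1)$ in place of $\tfrac12$ works equally well.
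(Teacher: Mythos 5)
Your proposal is correct and is essentially the paper's own argument: both exploit the same sum-versus-supremum gap by assigning every instance $A[t/x]$ the constant value $\tfrac12$, so that the capped infinite sum in the premise reaches $1$ while $\mathcal{V}(\exists xA)=\sup_t\mathcal{V}(A[t/x])=\tfrac12$ falls short in the conclusion. Your version merely instantiates the paper's abstract choices ($\mathcal{V}(\Gamma)=1$, $\mathcal{V}(\Delta)=0$) concretely via empty $\Gamma,\Delta$ and an atomic $A$, which if anything makes the existence of the required valuation more transparent.
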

\begin{proof}
Let $\mathcal{V}$ be a model such that $\mathcal{V}(\Gamma)=1$, $\mathcal{V}(\Delta)=0$ and $\mathcal{V}(A[t/x])=\frac{1}{2}$ for each $t\in\text{CTer}_\mathcal{L}$. It follows that $\mathcal{V}(\Gamma)\leq\min\{1,\sum^\infty_{i=0}(\mathcal{V}(A[t_i/x]))+\mathcal{V}(\Delta)\}$ where $(t_i)_{i=0}^\infty$ is a complete enumeration of $\text{CTer}_\mathcal{L}$. However, $\mathcal{V}(\Gamma)\leq\min\{1,\mathcal{V}(\exists xA)+\mathcal{V}(\Delta)\}$ does not hold since $\sup\{\mathcal{V}(A[t/x])\mid t\in\text{CTer}_\mathcal{L}\}<\sum^\infty_{i=0}(\mathcal{V}(A[t_i/x]))$.
\end{proof}

\section{Soundness for $S_{\text{IK}^\omega}$}
The sup-clause for $\exists$ is not strong enough for our purposes. This should not come as a surprise because the rules $\exists\text{L}^\omega$ and $\exists\text{R}^\omega$ are generalisations of strong disjunction, not weak disjunction. A natural proposal to overcome this problem is to replace the sup-clause for $\exists$ with the corresponding sum-clause as follows:
\begin{definition}
Let a first-order continuum-valued \L{}ukasiewicz-valuation with multiplicative quantifier (\L{}MQ-valuation) be like a \L{}Q-valuation with the exception that
$$\mathcal{V}(\exists xA)=\min\Big\{1,\sum^\infty_{i=0}\mathcal{V}(A[t_i/x])\Big\},$$ where $\{t_j\}_{j=0}^\infty$ is a complete enumeration of $t\in\text{CTer}_\mathcal{L}$.
\end{definition}
It remains to show that $S_{\text{IK}^\omega}$ is sound with regard to our new models where sequents are interpreted in the same way as above. To this end, we make the following definition. 

\begin{definition}
Let an $S_{\text{IK}^\omega}$-sequent $\Gamma\Rightarrow\Delta$ be sound with regard to an \L{}MQ-valuation $\mathcal{V}$ if and only if $$1-\min\Big\{1,\sum_{A\in\Gamma}(1-\mathcal{V}(A))\Big\}\leq\min\Big\{1,\sum_{B\in\Delta}\mathcal{V}(B)\Big\}.$$
An $S_{\text{IK}^\omega}$-sequent $\Gamma\Rightarrow\Delta$ is valid with regard to \L{}MQ-valuations if and only if $\Gamma\Rightarrow\Delta$ is sound with regard to every \L{}MQ-valuation. An $S_{\text{IK}^\omega}$-rule $\mathcal{R}$ is sound with regard to \L{}MQ-valuations if and only if, for every \L{}MQ-valuation $\mathcal{V}$, if every premise $\Gamma'\Rightarrow\Delta'$ of $\mathcal{R}$ is sound with regard to $\mathcal{V}$, then the conclusion $\Gamma'\Rightarrow\Delta'$ of $\mathcal{R}$ is sound with regard to $\mathcal{V}$.
\end{definition}
The following lemma  is essential for establishing that the rule $\exists\text{L}^\omega$ is sound.
\begin{lemma}
\label{lem:infinite}
Let $\Gamma = \{\gamma_i\}_{i=0}^\infty$, $\Delta = \{\delta_i\}_{i=0}^\infty$ and $X=\{\chi_i\}_{i=0}^\infty$ be infinite but enumerable sets of variables  $\gamma_i, \delta_i, \chi_i \in [0,1]$ so that for each index $i$, we have $$1-\min\big\{1,(1-\gamma_i)+(1-\chi_i)\big\}\leq\delta_i.$$
Then
$$1-\min\Big\{1,\sum^\infty_{i=0}(1-\gamma_i)+\big(1-\min\big\{1,\sum^\infty_{i=0}\chi_i\big\}\big)\Big\}\leq\min\Big\{1,\sum^\infty_{i=0}\delta_i\Big\}.$$
\end{lemma}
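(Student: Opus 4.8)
The plan is to strip away the truncations $1-\min\{1,x\}=\max\{0,1-x\}$ from both the hypothesis and the conclusion, reducing everything to a single termwise inequality between the reals, and then to transport that termwise inequality to the level of the series by a partial-sum argument that takes limits. The only genuinely delicate point is that the three series may diverge, so they cannot be manipulated as if they were finite numbers.

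First I would rewrite the hypothesis. For each $i$ the quantity $(1-\gamma_i)+(1-\chi_i)$ is nonnegative, so
\[
1-\min\big\{1,(1-\gamma_i)+(1-\chi_i)\big\}=\max\{0,\gamma_i+\chi_i-1\},
\]
and since $\delta_i\ge 0$ is automatic, the hypothesis is equivalent to the single termwise inequality $\gamma_i+\chi_i-1\le\delta_i$. Next I abbreviate $G=\sum_{i=0}^\infty(1-\gamma_i)$, $C=\sum_{i=0}^\infty\chi_i$ and $D=\sum_{i=0}^\infty\delta_i$; each is a sum of nonnegative terms drawn from $[0,1]$, hence a monotone limit of partial sums with value in $[0,+\infty]$. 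Applying the same identity to the conclusion, its left-hand side equals $\max\{0,\min\{1,C\}-G\}$. As $\min\{1,D\}\ge 0$, the inequality is immediate unless $\min\{1,C\}-G>0$, so I may assume $G<\min\{1,C\}\le 1$; in particular $G$ is finite. It then remains to establish $\min\{1,C\}-G\le\min\{1,D\}$.

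The decisive step is the partial-sum comparison. Summing $\gamma_i+\chi_i-1\le\delta_i$ over $i\le N$ and rearranging gives
\[
\sum_{i=0}^N\chi_i-\sum_{i=0}^N(1-\gamma_i)\le\sum_{i=0}^N\delta_i\le D
\]
for every $N$. If $C\le 1$, then letting $N\to\infty$ (so that $\sum_{i=0}^N\chi_i\to C$ and $\sum_{i=0}^N(1-\gamma_i)\to G$, both finite) yields $\min\{1,C\}-G=C-G\le D$. If instead $C>1$, then for each $\varepsilon>0$ I pick $N$ with $\sum_{i=0}^N\chi_i\ge 1-\varepsilon$ and use $\sum_{i=0}^N(1-\gamma_i)\le G$ to obtain $1-\varepsilon-G\le D$, whence $\min\{1,C\}-G=1-G\le D$ after $\varepsilon\to 0$. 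In both cases $\min\{1,C\}-G\le D$. Combining this with the trivial bound $\min\{1,C\}-G\le\min\{1,C\}\le 1$ gives $\min\{1,C\}-G\le\min\{1,D\}$, which is exactly what is needed.

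I expect the main obstacle to be the bookkeeping forced by possibly divergent series: since $G$ or $C$ may equal $+\infty$, the expression $C-G$ need not be meaningful and the two nonnegative series cannot simply be subtracted term by term at the level of their limits. The partial-sum formulation above is designed precisely to sidestep this, because monotone partial sums always converge in $[0,+\infty]$ and the termwise inequality survives passage to the limit; the reduction $G<1$ extracted from the nontrivial case is what guarantees the limits being subtracted are finite and keeps the manipulation legitimate.
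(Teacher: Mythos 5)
Your proof is correct and takes essentially the same route as the paper's: both reduce the hypothesis to the termwise inequality $\gamma_i+\chi_i-1\le\delta_i$, split into cases according to whether $\sum_{i}\chi_i$ reaches $1$, dispose of the subcases in which a truncation at $1$ renders the inequality trivial, and conclude by summing the termwise inequality. Your partial-sum and $\varepsilon$ bookkeeping is merely a more explicit treatment of possible divergence than the paper's observation that, in the remaining nontrivial subcases, all the series involved converge.
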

\begin{proof}
The assumptions imply the following (clearly equivalent) inequalities:
\begin{gather}
\tag{$*$}
    \chi_i\leq \delta_i + 1-\gamma_i,
\end{gather}
\begin{gather}
\tag{$**$}
    0\leq 1-\gamma_i-\chi_i+\delta_i.
\end{gather}
We now split the proof into two cases. 

\textbf{Case 1}: Suppose that $\sum^\infty_{i=0} \chi_i \geq 1$. The desired conclusion reduces to 
$$1\leq\min\Big\{1,\sum^\infty_{i=0}\delta_i\Big\}+\min\Big\{1,\sum^\infty_{i=0}(1-\gamma_i)\Big\}.$$
Now, if either $\sum^\infty_{i=0}\delta_i\geq 1$ or $\sum^\infty_{i=0}(1-\gamma_i)\geq 1$ hold, then the  conclusion  follows trivially. If not, then the desired inequality can be expressed on the form
\begin{align*}
    1 &\leq 
     \sum^\infty_{i=0}(\delta_i + 1-\gamma_i),
\end{align*}
where we used that, by our assumptions, all involved series converge. Clearly, this   inequality  follows immediately from ($\ast$).

\textbf{Case 2}: Suppose that $\sum^\infty_{i=0}\chi_i<1$. The desired conclusion now becomes
$$1-\min\Big\{1,\sum^\infty_{i=0}(1-\gamma_i)+(1-\sum^\infty_{i=0}\chi_i)\Big\}\leq\min\Big\{1,\sum^\infty_{i=0}\delta_i\Big\},$$
which we rewrite on the form
$$1\leq\min\Big\{1,1+\sum^\infty_{i=0}(1-\gamma_i)-\sum^\infty_{i=0}\chi_i\Big\}+\min\Big\{1,\sum^\infty_{i=0}\delta_i\Big\}.$$
This is trivially true if $1+\sum^\infty_{i=0}(1-\gamma_i) - \sum_{i=0}^\infty \chi_i \geq 1$ or $\sum^\infty_{i=0}\delta_i\geq 1$, so we consider the case in which both are strictly less than $1$. As above, since all involved series are convergent, we can rewrite the desired conclusion on the form 
\begin{align*}
    1 
    \leq 1+\sum^\infty_{i=0}(1-\gamma_i - \chi_i + \delta_i).
\end{align*}
Clearly, this   inequality follows immediately from    $(**)$.
\end{proof}
With this lemma at hand, soundness is straight-forward.
\begin{theorem}
\label{thm:consistency}
If there is a derivation of $\Gamma\Rightarrow\Delta$ in $S_{\text{IK}^\omega}$, then $\Gamma\Rightarrow\Delta$ is valid with regard to \L{}MQ-valuations.
\end{theorem}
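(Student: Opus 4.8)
The plan is to argue by induction on the well-founded derivation tree of $\Gamma\Rightarrow\Delta$, proving the slightly stronger statement that every sequent derivable in $S_{\text{IK}^\omega}$ is sound with regard to each fixed \L{}MQ-valuation $\mathcal{V}$; since $\mathcal{V}$ is then arbitrary, this delivers validity. Although $\exists\text{L}^\omega$ has infinitely many premises and the sequents themselves may be infinite, every branch of a derivation terminates in an initial sequent, so the tree is well-founded and the induction principle applies, with the inductive hypothesis available for all (possibly infinitely many) immediate subderivations at once. Two things then suffice: that the initial sequents are sound with regard to every $\mathcal{V}$, and that each rule preserves soundness with regard to each $\mathcal{V}$.

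For the base case I would fix $\mathcal{V}$ and abbreviate $a=\mathcal{V}(A)$, $p=\sum_{C\in\Gamma}(1-\mathcal{V}(C))$ and $q=\sum_{B\in\Delta}\mathcal{V}(B)$; soundness of $A,\Gamma\Rightarrow\Delta,A$ then reduces to $\max\{0,a-p\}\leq\min\{1,a+q\}$, which is immediate by cases on the sign of $a-p$ using $p,q\geq 0$ and $a\in[0,1]$. For the inductive step the rules split into two groups. The propositional rules ($\neg$L, $\neg$R, $\rightarrow$L, $\rightarrow$R) together with $\exists\text{R}^\omega$ are finitary rearrangements of truncated sums; the one recurring trick is that $\min\{1,s+\min\{1,r\}\}=\min\{1,s+r\}$ whenever $s,r\geq 0$. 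For $\exists\text{R}^\omega$ this trick, applied with $r=\sum_i\mathcal{V}(A[t_i/x])$, shows that the soundness conditions of premise and conclusion literally coincide, which is exactly the coincidence the \L{}MQ-clause for $\exists$ was designed to produce; $\rightarrow$L, which merges two contexts while introducing $A\rightarrow B$, needs only a short case analysis on the truncations.

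The main obstacle is $\exists\text{L}^\omega$, and this is precisely where Lemma \ref{lem:infinite} is deployed. Fixing $\mathcal{V}$, I would set $\chi_i=\mathcal{V}(A[t_i/x])$, $\delta_i=\mathcal{V}(\Delta_i)=\min\{1,\sum_{B\in\Delta_i}\mathcal{V}(B)\}$ and $\gamma_i=\mathcal{V}(\Gamma_i)=1-\min\{1,g_i\}$ with $g_i=\sum_{C\in\Gamma_i}(1-\mathcal{V}(C))$, so that all three lie in $[0,1]$. The soundness of the $i$-th premise $A[t_i/x],\Gamma_i\Rightarrow\Delta_i$ is then exactly the hypothesis $1-\min\{1,(1-\gamma_i)+(1-\chi_i)\}\leq\delta_i$ of the lemma, using that $(1-\chi_i)+g_i$ and $(1-\chi_i)+\min\{1,g_i\}$ have the same truncation at $1$. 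Since $\mathcal{V}(\exists xA)=\min\{1,\sum_i\chi_i\}$, the conclusion of the lemma bounds $1-\min\{1,\sum_i(1-\gamma_i)+(1-\mathcal{V}(\exists xA))\}$ by $\min\{1,\sum_i\delta_i\}$.

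It remains to observe that the two truncations introduced by collapsing each $\Gamma_i$ and each $\Delta_i$ to a single value both push the inequality in the favourable direction. On the antecedent side, $\sum_i(1-\gamma_i)=\sum_i\min\{1,g_i\}\leq\sum_i g_i$, so the left-hand side of the lemma is at least the true antecedent value $1-\min\{1,\sum_i g_i+(1-\mathcal{V}(\exists xA))\}$ of the conclusion. On the succedent side, $\sum_i\delta_i=\sum_i\min\{1,\sum_{B\in\Delta_i}\mathcal{V}(B)\}\leq\sum_i\sum_{B\in\Delta_i}\mathcal{V}(B)$, so $\min\{1,\sum_i\delta_i\}$ is at most the true succedent value of the conclusion. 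Chaining these two comparisons around the lemma yields that the antecedent value of $\exists xA,\Gamma_0,\Gamma_1,\ldots\Rightarrow\Delta_0,\Delta_1,\ldots$ is bounded by its succedent value, i.e.\ the conclusion is sound with regard to $\mathcal{V}$, completing the induction.
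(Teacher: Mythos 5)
Your proposal is correct and follows essentially the same route as the paper: induction on the (well-founded) derivation, with initial sequents checked algebraically, $\exists\text{R}^\omega$ handled by the truncation-collapse identity $\min\{1,s+\min\{1,r\}\}=\min\{1,s+r\}$, and $\exists\text{L}^\omega$ discharged by Lemma \ref{lem:infinite}. If anything, you are more explicit than the paper at one point: the paper states the $\exists\text{L}^\omega$ case directly in terms of the truncated values $\mathcal{V}(\Gamma_i)$ and $\mathcal{V}(\Delta_i)$ and leaves implicit the final comparison with the untruncated sums appearing in the soundness condition for the conclusion sequent, a step you carry out by observing that both truncations push the inequality in the favourable direction.
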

\begin{proof}
The proof proceeds by induction on the construction of a derivation for $\Gamma\Rightarrow\Delta$. We present the salient cases for illustration.

\textit{Initial sequents}: With $\mathcal{V}(\Gamma)$ and $\mathcal{V}(\Delta)$ being in $[0,1]$, we observe that $\mathcal{V}(\Gamma)-1$ $\leq\mathcal{V}(\Delta)$ and thus that $\mathcal{V}(A)+\mathcal{V}(\Gamma)-1\leq\mathcal{V}(A)+\mathcal{V}(\Delta)$. This implies $$1-\big(1-\mathcal{V}(A)+1-\mathcal{V}(\Gamma)\big)\leq\mathcal{V}(A)+\mathcal{V}(\Delta),$$ which in turn implies $$1-\min\big\{1,1-\mathcal{V}(A)+1-\mathcal{V}(\Gamma)\big\}\leq\mathcal{V}(A)+\mathcal{V}(\Delta).$$ With $1-\min\{1,1-\mathcal{V}(A)+1-\mathcal{V}(\Gamma)\}\leq 1$, it follows finally that $$1-\min\big\{1,1-\mathcal{V}(A)+1-\mathcal{V}(\Gamma)\big\}\leq\min\big\{1,\mathcal{V}(A)+\mathcal{V}(\Delta)\big\}.$$

$\exists\text{R}^\omega$: Assume that $\mathcal{V}(\Gamma)\leq\min\{1,\sum^\infty_{i=0}\mathcal{V}(A[t_i/x])+\mathcal{V}(\Delta)\}$. We now consider two cases. First, we suppose that $\sum^\infty_{i=0}\mathcal{V}(A[t_i/x])<1$. Then $\sum^\infty_{i=0}\mathcal{V}(A[t_i/x])$ is equal to $\min\{1,\sum^\infty_{i=0}\mathcal{V}(A[t_i/x])\}$, and we are done. In the second case,  we suppose that $\sum^\infty_{i=0}\mathcal{V}(A[t_i/x])\geq 1$. Under this assumption, the expression $$\min\Big\{1,\sum^\infty_{i=0}\mathcal{V}(A[t_i/x])+\mathcal{V}(\Delta)\Big\}$$ is equal to $$\min\Big\{1,\min\big\{1,\sum^\infty_{i=0}\mathcal{V}(A[t_i/x])\big\}+\mathcal{V}(\Delta)\Big\}.$$

$\exists\text{L}^\omega$: Assume that, for each closed term $t_i$, we have a triple $\langle\Gamma_i,At_i,\Delta_i\rangle$ such that $1-\min\{1,(1-\mathcal{V}(\Gamma_i))+(1-\mathcal{V}(At_i))\}\leq\mathcal{V}(\Delta_i)$. By Lemma \ref{lem:infinite}, we immediately obtain 
$$1-\min\Big\{1,\sum^\infty_{i=0}(1-\mathcal{V}(\Gamma_i))+(1-\mathcal{V}(\exists xA)\Big\}\leq\min\Big\{1,\sum^\infty_{i=0}\mathcal{V}(\Delta_i)\Big\}$$
through the clause for $\exists$.
\end{proof}

\section{Adding Transparent Truth?}
In \cite{Zardini2011-ZARTWC}, the language $\mathcal{L}$ is expanded with a designated one-place predicate $T$ together with a denumerable set of constants ``to serve as canonical names of all sentences in the language'' \cite[pp. 506]{Zardini2011-ZARTWC}. The constant for a formula $A$ will be referred to by either $\ulcorner A\urcorner$ or some lower-case letter as stipulated. Moreover, the calculus for IK$^\omega$ is expanded with the following rules to define the calculus for IKT$^\omega$ which we here label as $S_{\text{IKT}^\omega}$:
\[
\Axiom$A,\Gamma\Rightarrow\fCenter\;\Delta$
\RightLabel{TL}
\UnaryInf$T\ulcorner A\urcorner,\Gamma\Rightarrow\fCenter\;\Delta$
\DisplayProof
\quad
\Axiom$\Gamma\Rightarrow\fCenter\;\Delta,A$
\RightLabel{TR}
\UnaryInf$\Gamma\Rightarrow\fCenter\;\Delta,T\ulcorner A\urcorner$
\DisplayProof
\]
The predicate $T$ is thus intended as a transparent truth predicate. A cut-elimination proof for $S_{\text{IKT}^\omega}$ is presented by \cite{Zardini2011-ZARTWC} from which consistency of IKT$^\omega$ is concluded.

Now, it has been shown by Da Ré and Rosenblatt \cite{DaRe2018-DARCIQ} that if the simple coding scheme for generating self-reference used by \cite{Zardini2011-ZARTWC} to define $S_{\text{IKT}^\omega}$ is replaced with a G\"{o}del coding based on a theory of arithmetic containing function symbols for certain primitive recursive functions, then the resulting theory is not only $\omega$-inconsistent but also inconsistent.

The proof presented by \cite{DaRe2018-DARCIQ} relies on a result by Bacon \cite{Bacon2013-BACCPA-3} who showed that a transparent theory of theory based on Peano arithmetic is $\omega$-inconsistent if it satisfies the following additional conditions:
\begin{itemize}
    \item The language contains a one-place function symbol $\myd{\rightarrow}$ satisfying the equation $\ulcorner A\urcorner\myd{\rightarrow}\ulcorner B\urcorner=\ulcorner A\rightarrow B\urcorner$, and a two-place function symbol $\myd{f_b}$ satisfying the equations $\myd{f_b}(0,x)=x\myd{\rightarrow}\ulcorner\bot\urcorner$ and $\myd{f_b}(n+1,x)=x\myd{\rightarrow}\myd{f_b}(n,x)$ where $+$ is addition.
    \item The theory satisfies the following principles, here presented in sequent calculus format for uniformity:
\[
\Axiom$A\Rightarrow\fCenter\;B$
\RightLabel{B1}
\UnaryInf$\exists xA\Rightarrow\fCenter\;\exists xB$
\DisplayProof
\quad\quad
\Axiom$\fCenter$
\RightLabel{B2}
\UnaryInf$A\rightarrow\exists xB\Rightarrow\fCenter\;\exists x(A\rightarrow B)$
\DisplayProof
\]
\end{itemize}
\noindent Relying on the weak diagonal lemma, the proof of $\omega$-inconsistency in \cite{Bacon2013-BACCPA-3} employs the instance $\mu\leftrightarrow\exists xT\myd{f_b}(x,\ulcorner\mu\urcorner)$. Importantly, the theory of truth obtained by expanding \L{}$_\infty$ with sufficient arithmetic to define the above function symbols and transparent truth based on a suitable G\"{o}del coding satisfies these conditions. This result can be seen as a refinement of the observations by Restall \cite{Restall1992-RESAAT-3} and \cite{Hajek2000-HAJTLP} in which different primitive recursive functions are used for the same purpose with regard to \L{}$_\infty$, namely to show that the resulting theory of truth is $\omega$-inconsistent.

In \cite{DaRe2018-DARCIQ}, it is observed that B1 and B2 also hold in IKT$^\omega$, and it is shown that not only does the $\omega$-inconsistency result in \cite{Bacon2013-BACCPA-3} hold in a suitable variant of $S_{\text{IKT}^\omega}$, but the resulting theory is outright inconsistent through the rule $\exists\text{L}^\omega$. 

The result by \cite{DaRe2018-DARCIQ} can be strengthened in the following way. The formula used in \cite{Bacon2013-BACCPA-3} involves the truth predicate, the existential quantifier, the conditional and $\bot$ representing absurdity. As it turns out, we can, with inspiration from the result in Fjellstad \cite{Fjellstad2018-FJEICR}, replace the implication to $\bot$ in \cite{Bacon2013-BACCPA-3}'s formula with negation and thus employ a variation of the formula utilised by McGee \cite{McGee1985-MCGHTC} for a $\omega$-inconsistency result concerning some classical theories of truth.

\begin{proposition}
Let $S'$ be the sequent calculus obtained by expanding $S_{\text{IKT}^\omega}$ where $\ulcorner A\urcorner$ is the numeral of the G\"{o}del-code for a formula $A$ with a theory of arithmetic satisfying the following conditions: 
\begin{itemize}
    \item It includes the following equations for a one-place function symbol $\myd{T}$ and a two-place function symbol $\myd{f}_m$:
$$\myd{T}t=\ulcorner Tt\urcorner\quad\quad\myd{f}_m(0,\ulcorner A\urcorner)=\ulcorner A\urcorner\quad\quad\myd{f}_m(n+1,\ulcorner A\urcorner)=\myd{T}\myd{f}_m(n,\ulcorner A\urcorner)$$
    \item For each closed term $t$ there is a numeral $n$ such that $\Rightarrow t=n$ is derivable.
    \item If $\Rightarrow s=t$ is derivable then $A$ and $A[s/t]$ are intersubstitutable in every derivable sequent.
    \item There is a formula $\mu$ such that $\mu$ and $\neg\exists xT\myd{f}_m(x,\ulcorner\mu\urcorner)$ are intersubstitutable in every derivable sequent.
\end{itemize}
Then the theory defined with $S'$ is $\omega$-inconsistent and inconsistent.
\end{proposition}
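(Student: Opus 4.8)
The plan is to reduce the statement to a single collapsing observation and then to exploit the infinitary, multiplicative character of $\exists\text{L}^\omega$ and $\exists\text{R}^\omega$ to manufacture the infinitely many copies that a McGee-style diagonal argument requires. Write $\psi:=\exists xT\myd{f}_m(x,\ulcorner\mu\urcorner)$ and, for a closed term $t_i$, write $\theta_i:=T\myd{f}_m(t_i,\ulcorner\mu\urcorner)$ for the corresponding instance of the matrix. First I would establish the collapse: for every closed term $t$, the formula $T\myd{f}_m(t,\ulcorner\mu\urcorner)$ is intersubstitutable with $\mu$ in every derivable sequent. Using the condition that each closed term is provably equal to a numeral together with the equations for $\myd{f}_m$ and $\myd{T}$, the term $\myd{f}_m(t,\ulcorner\mu\urcorner)$ is provably equal to the numeral coding the finite truth-iteration $T\ulcorner T\ulcorner\cdots T\ulcorner\mu\urcorner\cdots\urcorner\urcorner$; peeling these finitely many occurrences of $T$ by repeated use of TL and TR then yields intersubstitutability with $\mu$. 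This step uses only transparency and arithmetic, not the fixed point; the fixed-point condition is used separately to identify $\mu$ with $\neg\psi$.

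The heart of the argument is a derivation of $\Rightarrow\psi$. Starting from the initial sequents $\mu\Rightarrow\mu$ and rewriting the antecedent by the collapse, I obtain $\theta_i\Rightarrow\mu$ for every $i$; a single application of $\exists\text{L}^\omega$ with empty antecedent-contexts and each $\Delta_i=\{\mu\}$ then yields $\psi\Rightarrow\mu,\mu,\mu,\ldots$ with $\omega$ copies of $\mu$ in the succedent. Rewriting each $\mu$ as $\neg\psi$ via the fixed point gives $\psi\Rightarrow\neg\psi,\neg\psi,\ldots$, and discharging the antecedent occurrence of $\psi$ by $\neg\text{R}$ produces $\Rightarrow\neg\psi,\neg\psi,\ldots$: crucially, the occurrence of $\neg\psi$ newly created by $\neg\text{R}$ joins a succedent in which $\neg\psi$ already occurs with multiplicity $\omega$, so that the multiplicity is unchanged. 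Re-expressing the $i$-th copy of $\neg\psi$ as the instance $\theta_i$ (again by the collapse) turns the succedent into the family $\theta_0,\theta_1,\theta_2,\ldots$ demanded by $\exists\text{R}^\omega$, and folding by that rule delivers $\Rightarrow\psi$. The point to stress is that this breaks the circularity that defeats any finitary liar derivation: $\exists\text{L}^\omega$ generates $\omega$ copies of $\mu$, equivalently $\neg\psi$, from the single quantified formula, and the infinite multiplicity then absorbs the cost of discharging that formula by $\neg\text{R}$.

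With $\Rightarrow\psi$ in hand, both conclusions follow quickly. For $\omega$-inconsistency, $\Rightarrow\psi$ is a proof of $\exists xT\myd{f}_m(x,\ulcorner\mu\urcorner)$, while for each numeral $n$ the instance $\theta_n$ is intersubstitutable with $\neg\psi$, so its negation is intersubstitutable with $\psi$ and $\Rightarrow\neg\theta_n$ is derivable; thus the existential is proved while every numerical instance is refuted. For outright inconsistency, applying $\neg\text{L}$ to $\Rightarrow\psi$ gives $\neg\psi\Rightarrow$, whence $\theta_i\Rightarrow$ for every $i$ by the collapse, and a second application of $\exists\text{L}^\omega$ with empty contexts yields $\psi\Rightarrow$. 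Cutting $\Rightarrow\psi$ against $\psi\Rightarrow$ produces the empty sequent; I would note that cut is available here, since it is precisely the non-eliminability of cut that is at issue for this calculus.

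I expect the main obstacle to be the penultimate move in the derivation of $\Rightarrow\psi$, where $\omega$ copies of the single formula $\neg\psi$ are re-expressed as the family of distinct instances $\theta_0,\theta_1,\ldots$ so that $\exists\text{R}^\omega$ can fire. This is an infinitary intersubstitution — infinitely many occurrences rewritten simultaneously, each to a different but equivalent formula — and it amounts to treating a multiset of $\omega$ equivalent formulas as the instances of an existential quantifier whose matrix is, after the collapse, insensitive to the bound variable. This is exactly the sensitive point the paper isolates under the heading of vacuous quantification: whether the calculus licenses folding $\omega$ equivalent copies into $\exists x$ is what the argument turns on. I would therefore devote care to verifying that the intersubstitutability closure extends to infinitely many simultaneous occurrences and that the succedent multiset produced really matches the family required by $\exists\text{R}^\omega$; the collapse lemma and the multiplicity bookkeeping are otherwise routine.
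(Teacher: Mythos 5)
Your overall architecture does track the paper's: derive $\Rightarrow\exists xT\myd{f}_m(x,\ulcorner\mu\urcorner)$ using $\exists\text{L}^\omega$, $\neg$R, TR and $\exists\text{R}^\omega$, then refute every instance and close with $\exists\text{L}^\omega$ and $\neg$R. But the step you yourself flag as the ``main obstacle'' is not bookkeeping; it is a genuine gap that defeats the derivation as you have arranged it. Writing, as you do, $\psi$ for $\exists xT\myd{f}_m(x,\ulcorner\mu\urcorner)$ and $\theta_i$ for $T\myd{f}_m(i,\ulcorner\mu\urcorner)$: after your application of $\exists\text{L}^\omega$ you hold $\psi\Rightarrow\mu,\mu,\ldots$, and you must convert the $\omega$ identical copies in the succedent into the pairwise \emph{distinct} instances $\theta_0,\theta_1,\theta_2,\ldots$ that $\exists\text{R}^\omega$ demands. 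That conversion is not an instance of any stated intersubstitutability condition: those conditions cover only the fixed point ($\mu$ versus $\neg\psi$) and provably equal terms, whereas turning $\mu$ into $T\ulcorner\mu\urcorner$ and onward to $\theta_i$ requires the \emph{rule} TR, applied $i+1$ times, each application a node of the derivation tree acting on the whole sequent. Doing this for all $\omega$ copies stacks infinitely many unary rule applications on a single branch, which is not a well-founded derivation; nor may you pass to a limit of derivable approximations, since derivability in this calculus is not closed under such limits. Your diagnosis that this is the paper's vacuous-quantification issue is also off target: that discussion concerns the later propositions about $\neg\exists xTl$; here $x$ genuinely occurs in the matrix, the required instances are genuinely distinct, and that is exactly why the post hoc rewriting cannot succeed.

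The paper's proof is engineered to avoid this. The differentiation into distinct instances happens \emph{inside the premises} of $\exists\text{L}^\omega$, each of which is finitely derivable: from the initial sequent $T\myd{f}_m(i,\ulcorner\mu\urcorner)\Rightarrow T\myd{f}_m(i,\ulcorner\mu\urcorner)$, one TR plus the equations for $\myd{T}$ and $\myd{f}_m$ yields the shifted sequent $T\myd{f}_m(i,\ulcorner\mu\urcorner)\Rightarrow T\myd{f}_m(i+1,\ulcorner\mu\urcorner)$. Feeding these into $\exists\text{L}^\omega$ gives $\psi\Rightarrow\theta_1,\theta_2,\ldots$, whose succedent already consists of all needed instances except $\theta_0$; then $\neg$R, a single fixed-point substitution, and a single TR supply $\theta_0$, and $\exists\text{R}^\omega$ closes. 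So the repair of your argument is precisely the paper's shift trick, and it requires restructuring the derivation, not strengthening an intersubstitutability lemma. Two smaller defects: (i) your route to $\omega$-inconsistency via ``$\neg\theta_n$ is intersubstitutable with $\psi$'' both garbles a double negation and substitutes under $\neg$, which TL/TR never license (they act only on whole formulas at the top level of sequents); the correct route, which you in fact use later, is $\neg$L on $\Rightarrow\psi$, one fixed-point substitution to get $\mu\Rightarrow$, and finitely many TLs to get each $\theta_n\Rightarrow$. (ii) Your final cut is both unavailable and unnecessary: cut is not among the rules of $S_{\text{IKT}^\omega}$ and its admissibility is exactly what is in doubt in this literature; the paper instead concludes inconsistency from having derived both $\Rightarrow\psi$ and $\Rightarrow\neg\psi$, which your sequents $\psi\Rightarrow$ and $\neg$R already give.
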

\begin{proof}
The following derivation is now available where some steps are left implicit for readability:

\begin{small}
\[
\Axiom$T\myd{f}_m(0,\ulcorner\mu\urcorner)\Rightarrow\fCenter\;T\myd{f}_m(0,\ulcorner\mu\urcorner)$
\RightLabel{$T$R}
\UnaryInf$T\myd{f}_m(0,\ulcorner\mu\urcorner)\Rightarrow\fCenter\;T\ulcorner T\myd{f}_m(0,\ulcorner\mu\urcorner)\urcorner$
\RightLabel{def.$\myd{f}_m$}
\UnaryInf$T\myd{f}_m(0,\ulcorner\mu\urcorner)\Rightarrow\fCenter\;T\myd{f}_m(1,\ulcorner\mu\urcorner)$
\Axiom$T\myd{f}_m(1,\ulcorner\mu\urcorner)\Rightarrow\fCenter\;T\myd{f}_m(2,\ulcorner\mu\urcorner)$
\Axiom$\ldots\fCenter$
\RightLabel{$\exists\text{L}^\omega$}
\insertBetweenHyps{\hskip 3pt}
\TrinaryInf$\exists xT\myd{f}_m(x,\ulcorner\mu\urcorner)\Rightarrow\fCenter\;T\myd{f}_m(1,\ulcorner\mu\urcorner),T\myd{f}_m(2,\ulcorner\mu\urcorner),\ldots$
\RightLabel{$\neg$R}
\UnaryInf$\Rightarrow\fCenter\;\neg\exists xT\myd{f}_m(x,\ulcorner\mu\urcorner),T\myd{f}_m(1,\ulcorner\mu\urcorner),T\myd{f}_m(2,\ulcorner\mu\urcorner),\ldots$
\RightLabel{$T$R}
\UnaryInf$\Rightarrow\fCenter\;T\myd{f}_m(0,\ulcorner\mu\urcorner),T\myd{f}_m(1,\ulcorner\mu\urcorner),T\myd{f}_m(2,\ulcorner\mu\urcorner),\ldots$
\RightLabel{$\exists\text{R}^\omega$}
\UnaryInf$\Rightarrow\fCenter\;\exists xT\myd{f}_m(x,\ulcorner\mu\urcorner)$
\RightLabel{$\neg$L}
\UnaryInf$\neg\exists xT\myd{f}_m(x,\ulcorner\mu\urcorner)\Rightarrow\fCenter$
\RightLabel{$T$L/def.$\myd{f}_m$}
\UnaryInf$T\myd{f}_m(0,\ulcorner\mu\urcorner)\Rightarrow\fCenter$
\DisplayProof
\]
\end{small}

Through iterated applications of the definition of $\myd{f}_m$ and $T$L we obtain the sequent $T\myd{f}_m(i,\ulcorner\mu\urcorner)\Rightarrow$ for each numeral $i$ and thus for each closed term. The defined theory is therefore $\omega$-inconsistent. With one application of $\exists\text{L}^\omega$ and one application of $\neg$R, we finally obtain the sequent
$\Rightarrow\neg\exists xT\myd{f}_m(x,\ulcorner\mu\urcorner)$. The theory is inconsistent.\end{proof}

This result shows that while the conditional is required for $\omega$-inconsistency in the case of \L{}$_\infty$ with transparent truth, this is not the case with IKT$^\omega$.

This result is clearly not ideal for the project initiated by \cite{Zardini2011-ZARTWC} where $S_{\text{IK}^\omega}$ is presented for the purpose of being a logic for reasoning about transparent truth as it brings forth a significant limitation with the approach. However, one might nonetheless think that the issue is not worse for $S_{\text{IKT}^\omega}$ than for \L{}$_\infty$ with transparent truth or the classical theory of truth FS presented by Friedman and Sheard \cite{Friedman1987-FRIAAA} and further explored by Halbach \cite{Halbach1994-VOLASO-2} which satisfies the conditions for \cite{McGee1985-MCGHTC}'s $\omega$-inconsistency result. For example, one could perhaps block the inconsistency by supplying the theory with numerals for non-standard numbers, thereby avoiding that $\exists\text{L}^\omega$ is turned into a Hilbertian $\omega$-rule. Moreover, since \L{}$_\infty$ can be expanded with transparent truth, as shown in \cite{Hajek2000-HAJTLP}, as long as certain primitive recursive functions are not captured by some function symbols, it might even be tempting to investigate whether the soundness proof from the previous section can be utilised as a consistency proof of IKT$^\omega$ under similar constraints. 

Indeed, despite their negative result, Da Ré and Rosenblatt \cite{DaRe2018-DARCIQ} still express confidence in that the original theory IKT$^\omega$ is consistent with reference to the cut-elimination proof presented by \cite{Zardini2011-ZARTWC}. However, that proof has since   been shown  in \cite{Fjellstad2020-FJEANO} to contain an error. In particular, it is shown that the reduction step employed for the quantifier rules is inadequate. It follows that the claim by \cite{Zardini2011-ZARTWC} that IKT$^\omega$ is consistent is unsupported. This would thus be another reason for investigating whether the soundness proof can be turned into a consistency proof.

This is not the case. With inspiration from the discussion about quantifiers in Rosenblatt \cite{Rosenblatt2019-ROSNCL}, we can show the following:
\begin{proposition}
\L{}MQ-valuations are inconsistent with the condition that the language contains a one-place predicate $T$ and a constant $l$ such that the following holds.
$$\mathcal{V}(Tl)=\mathcal{V}(\neg\exists xTl).$$
\end{proposition}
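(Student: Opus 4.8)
The plan is to exploit the vacuous quantification hidden in the formula $\exists x Tl$. Since $Tl$ contains no free occurrence of $x$, the substitution $Tl[t_i/x]$ equals $Tl$ for every closed term $t_i$, so the \L{}MQ-clause for $\exists$ collapses to
$$\mathcal{V}(\exists x Tl)=\min\Big\{1,\sum_{i=0}^\infty \mathcal{V}(Tl)\Big\}.$$
Because $\text{CTer}_\mathcal{L}$ is infinite, this infinite sum of a single constant behaves in a two-valued manner: if $\mathcal{V}(Tl)>0$ the series diverges and the minimum equals $1$, whereas if $\mathcal{V}(Tl)=0$ the sum is $0$.

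First I would record this dichotomy as the key step: $\mathcal{V}(\exists x Tl)=1$ whenever $\mathcal{V}(Tl)>0$, and $\mathcal{V}(\exists x Tl)=0$ whenever $\mathcal{V}(Tl)=0$. In other words, the multiplicative reading of the quantifier forces $\mathcal{V}(\exists x Tl)\in\{0,1\}$ regardless of the intermediate value that $\mathcal{V}(Tl)$ might take.

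Next I would substitute into the hypothesised identity $\mathcal{V}(Tl)=\mathcal{V}(\neg\exists x Tl)=1-\mathcal{V}(\exists x Tl)$ and split into the two cases. If $\mathcal{V}(Tl)>0$, then $\mathcal{V}(\exists x Tl)=1$ forces $\mathcal{V}(Tl)=0$, contradicting $\mathcal{V}(Tl)>0$. If instead $\mathcal{V}(Tl)=0$, then $\mathcal{V}(\exists x Tl)=0$ forces $\mathcal{V}(Tl)=1$, contradicting $\mathcal{V}(Tl)=0$. Either way the identity cannot be satisfied by any \L{}MQ-valuation, which establishes the claimed inconsistency.

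The work here is almost entirely conceptual rather than computational: the only real obstacle is spotting that the inconsistency stems from vacuous quantification, which turns the sum-clause into a divergent series and thereby eliminates the intermediate truth-values that would normally let a \L{}ukasiewicz fixed point $\mathcal{V}(Tl)=\frac{1}{2}$ resolve a liar-like identity. Once that observation is in place the contradiction is immediate, and no estimation or limiting argument is required.
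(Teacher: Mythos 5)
Your proposal is correct and follows essentially the same route as the paper's own proof: exploit the vacuity of $x$ in $Tl$ so that the multiplicative $\exists$-clause becomes a divergent (or identically zero) series, forcing $\mathcal{V}(\exists xTl)\in\{0,1\}$, and then derive a contradiction from $\mathcal{V}(Tl)=1-\mathcal{V}(\exists xTl)$ in each of the two cases $\mathcal{V}(Tl)>0$ and $\mathcal{V}(Tl)=0$. The only difference is cosmetic: in the second case the paper phrases the contradiction as a violation of the $\neg$-clause, whereas you push the negation through to get $\mathcal{V}(Tl)=1$ against $\mathcal{V}(Tl)=0$.
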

\begin{proof}
Assume that $\mathcal{V}(Tl)>0$. It follows that $\mathcal{V}(\exists xTl)=1$ since, for each closed term $t$, it holds that $\mathcal{V}(Tl[t/x])=\mathcal{V}(Tl)$. This implies that $\mathcal{V}(\neg\exists xTl)=0$ and thus $\mathcal{V}(Tl)=0$, which contradicts our assumption. Assume instead that $\mathcal{V}(Tl)=0$. It follows that $\mathcal{V}(\exists xTl)=0$ by the $\exists$-clause and, moreover, that $\mathcal{V}(\neg\exists xTl)=0$ by the condition for $Tl$. This contradicts the $\neg$-clause.
\end{proof}
Note the vacuous quantification in the formula $\neg\exists xTl$; while dubious it is certainly permissible as per the language stipulations in \cite{Zardini2011-ZARTWC}. In fact, we can show the corresponding result for IKT$^\omega$.

\begin{proposition}
If the language for $S_{\text{IKT}^\omega}$ contains a constant $l$ such that $l$ serves as canonical name for the formula $\neg\exists xTl$ then IKT$^\omega$ is inconsistent.
\end{proposition}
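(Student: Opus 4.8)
The plan is to reproduce, at the level of the sequent calculus, the contradiction exhibited semantically in the previous proposition, exploiting exactly the vacuous quantification flagged in the remark. By hypothesis $l=\ulcorner\neg\exists x\,Tl\urcorner$, so that $Tl = T\ulcorner\neg\exists x\,Tl\urcorner$. Two features will drive the argument. First, transparency: the rules $T$L and $T$R let me pass freely between $Tl$ and $\neg\exists x\,Tl$ on either side of the turnstile. Second, vacuous quantification: since $x$ does not occur in $Tl$, we have $Tl[t/x]=Tl$ for every closed term $t$, so every instance of the matrix in $\exists\text{L}^\omega$ and $\exists\text{R}^\omega$ is literally $Tl$. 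This is what replaces the arithmetical ``ladder'' $T\myd{f}_m(i,\ulcorner\mu\urcorner)\Rightarrow T\myd{f}_m(i+1,\ulcorner\mu\urcorner)$ of the earlier inconsistency proof: here the index-shift collapses to the identity.

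The core step is to derive $\Rightarrow\exists x\,Tl$ outright. Starting from $\omega$-many instances of the initial sequent $Tl\Rightarrow Tl$, one application of $\exists\text{L}^\omega$ (with empty side antecedents and each $\Delta_i=\{Tl\}$) yields $\exists x\,Tl\Rightarrow Tl,Tl,\ldots$, carrying $\omega$ copies of $Tl$ in the succedent. Applying $\neg$R gives $\Rightarrow\neg\exists x\,Tl, Tl, Tl,\ldots$. Now comes the diagonal step: since $l=\ulcorner\neg\exists x\,Tl\urcorner$, a single application of $T$R rewrites the displayed $\neg\exists x\,Tl$ as $T\ulcorner\neg\exists x\,Tl\urcorner=Tl$, producing $\Rightarrow Tl, Tl, Tl,\ldots$ with $\omega$ copies still present, since augmenting an $\omega$-fold multiplicity by one leaves it unchanged. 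A single application of $\exists\text{R}^\omega$ then reads this succedent as the full enumeration of instances $Tl[t_0/x],Tl[t_1/x],\ldots$ and collapses it into $\exists x\,Tl$, giving $\Rightarrow\exists x\,Tl$.

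With $\Rightarrow\exists x\,Tl$ in hand, $\neg$L yields $\neg\exists x\,Tl\Rightarrow$, and one application of $T$L (again using $l=\ulcorner\neg\exists x\,Tl\urcorner$) turns this into $Tl\Rightarrow$. Feeding $\omega$-many copies of $Tl\Rightarrow$ into $\exists\text{L}^\omega$ (all side multisets empty) gives $\exists x\,Tl\Rightarrow$; a cut on $\exists x\,Tl$ against $\Rightarrow\exists x\,Tl$ then produces the empty sequent, and equivalently $\neg$R on $\exists x\,Tl\Rightarrow$ gives $\Rightarrow\neg\exists x\,Tl$, so the theory proves both $\exists x\,Tl$ and its negation, exactly as at the close of the earlier proof. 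Either way IKT$^\omega$ is inconsistent. I expect the only genuine obstacle to be the apparent circularity: each of $\Rightarrow\exists x\,Tl$ and $\exists x\,Tl\Rightarrow$ looks as though it already presupposes the other. This is precisely what the diagonal $T$R step resolves, since it manufactures an extra copy of $Tl$ out of $\neg\exists x\,Tl$ and lets $\exists\text{R}^\omega$ fire with no prior commitment; the rest is routine multiset bookkeeping at multiplicity $\omega$ together with checking that each transparency rewrite is a single licit instance of $T$L or $T$R.
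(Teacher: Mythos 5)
Your proof is correct and is essentially identical to the paper's own derivation: the same $\exists\text{L}^\omega$/$\neg$R/TR/$\exists\text{R}^\omega$ chain yielding $\Rightarrow\exists x\,Tl$, then $\neg$L and TL giving $Tl\Rightarrow$, fed back into $\exists\text{L}^\omega$ and $\neg$R to obtain $\Rightarrow\neg\exists x\,Tl$. The only caveat is your optional appeal to cut, which is not among the listed rules of $S_{\text{IKT}^\omega}$; but your alternative via $\neg$R is exactly the paper's conclusion, so nothing is lost.
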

\begin{proof}
Through vacuous quantification we have the following derivation:
\[
\Axiom$Tl\Rightarrow\fCenter\;Tl$
\Axiom$Tl\Rightarrow\fCenter\;Tl$
\Axiom$\ldots\fCenter$
\RightLabel{$\exists\text{L}^\omega$}
\TrinaryInf$\exists xTl\Rightarrow\fCenter\;Tl,Tl,\ldots$
\RightLabel{$\neg$R}
\UnaryInf$\Rightarrow\fCenter\;\neg\exists xTl,Tl,Tl,\ldots$
\RightLabel{TR}
\UnaryInf$\Rightarrow\fCenter\;Tl,Tl,Tl,\ldots$
\RightLabel{$\exists\text{R}^\omega$}
\UnaryInf$\Rightarrow\fCenter\;\exists xTl$
\RightLabel{$\neg$L}
\UnaryInf$\neg\exists x Tl\Rightarrow\fCenter$
\RightLabel{TL}
\UnaryInf$Tl\Rightarrow\fCenter$
\Axiom$Tl\Rightarrow\fCenter$
\Axiom$\ldots\fCenter$
\RightLabel{$\exists\text{L}^\omega$}
\TrinaryInf$\exists xTl\Rightarrow\fCenter$
\RightLabel{$\neg$R}
\UnaryInf$\Rightarrow\fCenter\;\neg\exists xTl$
\DisplayProof
\]
\end{proof}
Adding a copy of $Tl$ to an infinite sequence of $Tl$'s makes no difference to the totality of copies.

The issue of vacuous quantification was not addressed in \cite{Zardini2011-ZARTWC}, and we have thus taken for granted that it should be treated in the way that supports the above results. We maintain that this is reasonable, despite there being two options for how to understand vacuous quantification in the current setting. The first option is to require an infinite sequence of $Tl$ as above:
\[
\Axiom$\Gamma\Rightarrow\fCenter\;\Delta,Tl[t_0/x],Tl[t_1/x],Tl[t_2/x],\ldots$
\RightLabel{$\exists\text{R}^\omega$}
\UnaryInf$\Gamma\Rightarrow\fCenter\;\Delta,\exists xTl$
\DisplayProof
\]
Let us call this ``the multiplicative way''. The second option is to require that the rule takes only one copy of that formula as premise:
\[
\Axiom$\Gamma\Rightarrow\fCenter\;\Delta,Tl$
\RightLabel{$\exists\text{R}^\omega$}
\UnaryInf$\Gamma\Rightarrow\fCenter\;\Delta,\exists xTl$
\DisplayProof
\]
The latter option, which we can call ``the additive way'', comes with a contractive flavour and seems thus not to be in tune with the spirit of \cite{Zardini2011-ZARTWC}. After all, the corresponding restriction on $\exists\text{L}^\omega$ permits the derivation of $\exists xTl\Rightarrow Tl$ which actually amounts to the claim that $\bigoplus Tl$ implies $Tl$ if we consider vacuous existential quantification as corresponding to the infinitary multiplicative disjunction of a formula. The resulting logic would thus be such that the former inference is valid while the inference from $Tl\oplus Tl$ to $Tl$ is invalid.

\section{Conclusion}
We have shown that the logic $S_{\text{IK}^\omega}$ is sound with regard to \L{}ukasiewicz valuations that are modified by treating the existential quantifier as a generalisation of the strong disjunction instead of the weak disjunction. However, we have also shown that this soundness proof cannot be used as consistency proof for the theory of truth IKT$^\omega$ as long as vacuous quantification is treated in the multiplicative way. The problem is that valuations are in that case inconsistent with transparent truth. In addition, IKT$^\omega$ is inconsistent under the same condition.

\section*{Acknowledgements}

The first author would like to acknowledge the utility of the discussions with Casper Storm Hansen, Fausto Barbero and Uwe Petersen for the preparation of the material for this paper. The first author's research was supported by the Research Council of Norway under grant number 262837/F10 (co-funded by the European Union’s Seventh Framework Programme for research, technological development and demonstration under Marie Curie grant agreement no 608695).


%
\end{document}